\documentclass[reqno,11pt]{amsart}
\usepackage{url}
\usepackage{amssymb,amsthm,amsmath,amsfonts,amstext,mathrsfs}
\usepackage{latexsym}
\usepackage[all]{xy}

\usepackage[T1]{fontenc}
\usepackage{lmodern,amsmath,amssymb,amsthm,microtype}
\usepackage[textwidth=16.4cm,textheight=23.6cm,centering]{geometry}
\usepackage[hidelinks]{hyperref}

\newtheorem{thm}{Theorem}
\newtheorem{lem}{Lemma}
\newtheorem{prop}{Proposition}

\def\d{\,\mathrm{d}}
\newcommand{\m}{\mathfrak m}
\newcommand{\W}{\mathcal W}
\newcommand{\E}{\mathbb E}
\newcommand{\Pp}{\mathbb P}
\newcommand{\norm}[1]{\lVert#1\rVert}
\newcommand{\ind}{\mathbf 1}
\title[Giedrius Alkauskas]
{The Minkowski $?(x)$ function and Salem's problem. II}

\author[Giedrius Alkauskas]{Giedrius Alkauskas}
\begin{document}

\begin{abstract}
In 1943, R. Salem asked whether the Fourier--Stieltjes transform of the Minkowski question-mark function vanishes at infinity. This problem was answered affirmatively by Jordan and Sahlsten in 2016 as a consequence of their general results on Gibbs measures for the Gauss map. In this note we give a self-contained proof for \(?(x)\). Several aspects of our argument are substantially different, making a very short proof possible in the \(?(x)\) case. Moreover, we obtain an explicit polynomial decay estimate, with exponent at least \(0.0472\). We then prove the existence of the correlation dimension of the Minkowski measure, thereby improving the previous upper bound for the optimal Fourier decay exponent from \(0.4373_{+}\) to \(0.4223_{+}\).

\end{abstract}

\maketitle
%\begin{center}
%Mathematical Analysis/Number Theory
%\end{center}

\section{The coefficients and the measure}
 Recall that the Minkowski question mark function is defined, using a representation of $x\in[0,1]$ by a continued fraction, by
\[
	?([0,a_1,a_2,\ldots])
	=2\sum_{j=1}^{\infty}(-1)^{j+1}2^{-(a_1+\cdots+a_j)},
	\quad a_j\in\mathbb N.
\]
Following the notation of \cite{AlkCR}, put
\[
\m(t)=\int_0^1e^{xt}\d ?(x),\qquad
d_n=\m(2\pi i n)=\int_0^1\cos(2\pi nx)\d ?(x).
\]
In was shown in \cite{AlkFS,AlkCR} that $\m(it)$ satisfies an integral equation, and coefficients $d_{n}$ satisfy discrete equations. The present note, however, is independent of those results. In 1943, Salem~\cite{Salem} asked whether $d_n\to0$. This was answered in \cite{JS}  as a consequence of their general results on Gibbs measures for the Gauss map.
\begin{thm}[Jordan--Sahlsten] As $|t|\rightarrow\infty$, $|\m(it)|\ll |t|^{-\eta}$ for a certain $\eta>0$.
\end{thm}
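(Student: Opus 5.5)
The plan is to exploit the exact self-similarity of the Minkowski measure $\mu=\d ?$ under the inverse branches $g_a(x)=1/(a+x)$ of the Gauss map. From the series defining $?(x)$ one checks that $?(g_a(x))$ is an affine image of $?(x)$ on the cylinder $[1/(a+1),1/a]$. This gives
\[
\mu=\sum_{a\ge1}2^{-a}(g_a)_*\mu .
\]
Iterating $n$ times, with $\mathbf a=(a_1,\dots,a_n)$, $g_{\mathbf a}=g_{a_1}\circ\cdots\circ g_{a_n}$ and $|\mathbf a|=a_1+\dots+a_n$, we get
\[
\m(it)=\sum_{\mathbf a\in\mathbb N^n}2^{-|\mathbf a|}\int_0^1 e^{it g_{\mathbf a}(x)}\d ?(x).
\]
Each $g_{\mathbf a}$ is a M\"obius map $x\mapsto (p_n+p_{n-1}x)/(q_n+q_{n-1}x)$ with $|g_{\mathbf a}'(x)|\asymp q_n^{-2}$. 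The weights $2^{-|\mathbf a|}$ form a Bernoulli-type measure on words.

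\textbf{Step 1 (regularity inputs).} First I establish two elementary estimates.
\begin{enumerate}
\item A Frostman bound $\mu(I)\ll |I|^{\alpha}$ for some explicit $\alpha>0$. This follows because a cylinder of length $\asymp q_n^{-2}$ has mass $2^{-|\mathbf a|}$, and $q_n$ grows at most exponentially in $|\mathbf a|$.
\item A large-deviation statement for $\log q_n(\mathbf a)$ under the weights $2^{-|\mathbf a|}$. Outside an exceptional set of words of total mass $e^{-cn}$, we have $\log q_n=\lambda n+O(\varepsilon n)$, where $\lambda$ is the Lyapunov exponent of $\mu$.
\end{enumerate}
Both follow from submultiplicativity of continuants and a Chernoff bound on the i.i.d.\ geometric digits.

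\textbf{Step 2 (linearisation and Cauchy--Schwarz).} Choose $n\approx \varepsilon_0\log|t|/\lambda$ and use the measure decomposition $2n$ or $3n$ times. As in Jordan--Sahlsten, write $\mathbf a=\mathbf a_0\mathbf b_1\mathbf b_2\dots$ and apply Cauchy--Schwarz so that the phase is linearised. The resulting phase is, up to negligible error, $t\, g_{\mathbf a_0}'(\cdot)\, g_{\mathbf b_1}'(x_1)\cdots g_{\mathbf b_k}'(x_k)$. This reduces the bound to a multilinear exponential sum of the form
\[
\sum_{\mathbf b_1,\dots,\mathbf b_k}2^{-|\mathbf b_1|-\dots-|\mathbf b_k|}\,
e\bigl(\eta\, q_{\mathbf b_1}^{-2}\cdots q_{\mathbf b_k}^{-2}\bigr),
\qquad |\eta|\approx |t|^{\delta}.
\]
Here the words are restricted to the typical set from Step 1, and the atypical words contribute $e^{-cn}=|t|^{-c'}$.

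\textbf{Step 3 (main obstacle: the sum--product/non-concentration bound).} The heart of the argument is to show that this multilinear sum is $\ll |\eta|^{-\kappa}$. I would rescale $q_{\mathbf b}^{-2}$ to quantities $\asymp 1$ and invoke Bourgain's discretised sum--product estimate for multiplicative convolutions of measures on $[1,2]$. Its hypothesis is non-concentration: the pushforward of $2^{-|\mathbf b|}$ under $\mathbf b\mapsto e^{2\lambda n}q_{\mathbf b}^{-2}$ should give mass $\ll \rho^{\kappa_0}$ to every interval of length $\rho$ in a range of scales.

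I expect this non-concentration to be the hard point. My first attempt would be to derive it from the Frostman bound of Step 1 applied to the distribution of $q_{n-1}/q_n=[0;a_n,\dots,a_1]$, which is itself $?$-distributed and so inherits $\mu$'s regularity. I would then propagate it through $q_n$ via $q_n=a_nq_{n-1}+q_{n-2}$.

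To obtain an explicit exponent, I would track the constants $\alpha,\lambda$, the large-deviation rate $c$, and the dependence of $\kappa$ on $\kappa_0$ through the chain. I would then optimise the choice of $\varepsilon_0$ and $\delta$.
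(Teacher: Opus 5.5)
Your route differs from the paper's. It is also not Jordan--Sahlsten's: linearising to a multilinear sum and applying Bourgain's discretised sum--product estimate is the later Bourgain--Dyatlov/Sahlsten--Stevens scheme, whereas Jordan--Sahlsten, as the paper notes, organise branch pairs by continuant differences in Kaufman's style. The paper avoids linearisation and sum--product entirely. It stops at denominator scale $Q$ and expands $\int_0^1|f|^2\d x$ over pairs of branches. Each pair is bounded by van der Corput in terms of the lattice distance $D(\mathbf a,\mathbf b)$ between the distinct primitive pairs $(q_{\mathbf a},r_{\mathbf a})$ and $(q_{\mathbf b},r_{\mathbf b})$; lattice points are counted, and Lemma~\ref{transfer} passes from Lebesgue $L^2$ to $L^1(\mu)$ using Salem's exponent. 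Your outline has a genuine gap exactly where you expect it.

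\textbf{The gap is Step 3.} Bourgain's estimate needs, at every scale $\sigma\in[|\eta|^{-1},|\eta|^{-\epsilon}]$, a bound $\ll\sigma^{\kappa_0}$ for the mass of words whose rescaled derivative lies in a $\sigma$-window. The mechanism you sketch does not give this. Regularity of $q_{n-1}/q_n$ controls the direction of the vector $(q_{n-1},q_n)$, not its length, and the recursion does not convert one into the other. Conditioned on $a_1,\dots,a_{n-1}$, $\log q_n=\log q_{n-1}+\log(a_n+q_{n-2}/q_{n-1})$ lives on a discrete set with gaps $\asymp1/a_n$, so one step produces no spreading at scale $\sigma\ll1$. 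Averaging over the past only restates the problem for $q_{n-1}$. Spreading at scale $\sigma$ must come from about $\log(1/\sigma)$ digits varying jointly, and what is needed is lattice information on an integer pair, not regularity of a ratio.

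One way to supply it: split $\mathbf b=\mathbf c\mathbf d$ with $\mathbf c$ of length $k$ and $e^{\lambda k}\approx\sigma^{-1}$. The chain rule for $h_{\mathbf b}=h_{\mathbf c}\circ h_{\mathbf d}$ and \eqref{branch} give $q_{\mathbf b}+r_{\mathbf b}x=q_{\mathbf c}w_2+r_{\mathbf c}w_1$, where $w_2=q_{\mathbf d}+r_{\mathbf d}x$ and $w_1=w_2h_{\mathbf d}(x)\in[0,w_2]$ depend only on $\mathbf d$ and $x$. Conditioned on $\mathbf d$, a relative $\sigma$-window for $q_{\mathbf b}+r_{\mathbf b}x$ confines $(q_{\mathbf c},r_{\mathbf c})$, with $q_{\mathbf c}\asymp U$, to a strip containing $\ll U(1+\sigma U)$ lattice points. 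Each point comes from at most one $\mathbf c$ of length $k$, of mass $2^{-|\mathbf c|}\le q_{\mathbf c}^{-2\alpha}$ (digit sum, in your notation). Discarding atypical prefixes by your Step 1(2) gives $\ll\sigma^{2\alpha-1-O(\varepsilon)}+e^{-ck}$. This holds both for $q_{\mathbf b}$ and for the variables that actually occur after linearisation, $(q_{\mathbf b}+r_{\mathbf b}x_j)^{-2}$; replacing these by $q_{\mathbf b}^{-2}$ is not a negligible error.

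The argument needs $\alpha>1/2$, so ``some explicit $\alpha>0$'' in Step 1 is not enough. The bound $q_{\mathbf a}\le\prod_j(a_j+1)\le2^{|\mathbf a|}$ yields only $\alpha=1/2$, where the count gives nothing. You need the Fibonacci bound $q_{\mathbf a}\le F_{|\mathbf a|+1}\le\varphi^{|\mathbf a|}$ (replacing a digit $a\ge2$ by the two digits $a-1,1$ never decreases a continuant), which gives Salem's $\alpha=\log2/(2\log\varphi)$.

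With such a lemma your scheme should go through, but with much heavier machinery and an exponent that is explicit only in principle. The repair rests on the same two ingredients as the paper: lattice counting of primitive pairs $(q,r)$, and a Salem exponent above a threshold. That is why the paper can skip the sum--product step entirely and still obtain the explicit exponent of Theorem~\ref{main}.
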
	
The first aim of this note is, in case of the measure $d?(x)$, to simplify the proof and make the decay exponent explicit. We follow the same branch-pair $L^2$ strategy of Kaufman~\cite{Kaufman} and
Queff\'elec--Ramar\'e~\cite{QR}, but specialize it to the Minkowski measure, perform one major and  two minor simplifications. 

\begin{thm}\label{main} As $|t|\rightarrow\infty$, $|\m(it)|\ll |t|^{-1/35}$.
\end{thm}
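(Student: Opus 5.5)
The argument rests on the exact self-similarity of the Minkowski measure under the Gauss inverse branches. For a finite word $\mathbf a=(a_1,\dots,a_n)$, let $h_{\mathbf a}(x)=[0,a_1,\dots,a_{n-1},a_n+x]$ be the inverse branch. Directly from the definition of $?(x)$ one checks that the image of $\d ?$ under $h_{\mathbf a}$ is $\d ?$ restricted to the corresponding cylinder with total mass $2^{-(a_1+\cdots+a_n)}$. This gives the exact identity
\[
\m(it)=\sum_{\mathbf a\in\mathbb N^n}2^{-|\mathbf a|}\int_0^1 e^{ith_{\mathbf a}(x)}\d ?(x),
\qquad |\mathbf a|=a_1+\cdots+a_n .
\]
There is no Gibbs potential and no transfer operator error here; the weights are exactly $2^{-|\mathbf a|}$. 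This replaces the general Gibbs machinery of Jordan--Sahlsten and is the "major simplification".

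\textbf{Step 1 (the $L^2$ step).} Following Kaufman and Queff\'elec--Ramar\'e, apply Cauchy--Schwarz over $x$ and write $|\m(it)|^2$ as a double sum over pairs of words $\mathbf a,\mathbf b$, weighted by $2^{-|\mathbf a|-|\mathbf b|}$. I would rather iterate once more, splitting words as $\mathbf a\mathbf c$, so that the phase becomes $t\,h_{\mathbf a}(h_{\mathbf c}(x))$. Since $h_{\mathbf a}$ is a M\"obius map $x\mapsto (p'+px)/(q'+qx)$, one has $h_{\mathbf a}'(y)=\pm (q'+qy)^{-2}\approx q_{\mathbf a}^{-2}$. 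Linearizing around $y=h_{\mathbf c}(x)$ reduces the problem to sums of the form
\[
\sum_{\mathbf a,\mathbf b}2^{-|\mathbf a|-|\mathbf b|}\,\Bigl|\int e^{it(q_{\mathbf a}^{-2}-q_{\mathbf b}^{-2})\,\phi(x)}\d\mu(x)\Bigr| ,
\]
with controlled error terms. This is where the "branch-pair" structure enters.

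\textbf{Step 2 (large deviations and regularity).} Restrict to words of length $n$ whose denominators satisfy $q_{\mathbf a}^2\approx e^{\lambda n}$, where $\lambda$ is the Lyapunov exponent of $\d ?$. A large deviation estimate shows that the remaining words carry exponentially small mass. For $?$ this follows from explicit moment bounds on $\sum 2^{-|\mathbf a|}q_{\mathbf a}^{s}$, which are computable since $\E[a_j]=2$ with geometric tails. Choose $n$ with $e^{\lambda n}\approx |t|^{\theta}$. Two ingredients are then needed. The first is a Frostman bound $\mu(I)\ll |I|^{\alpha}$ for $\d ?$ with explicit $\alpha$; this is elementary from the dyadic/Farey structure, and $?$ is H\"older with exponent $\log 2/(2\log\frac{1+\sqrt5}{2})$. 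The second is a count of pairs $(\mathbf a,\mathbf b)$ with $|q_{\mathbf a}^{-2}-q_{\mathbf b}^{-2}|$ small. After one van der Corput/integration by parts in the inner integral, pairs with separated frequencies contribute $O(|t|^{-\epsilon})$. Near-diagonal pairs are bounded via the Frostman exponent applied to the distribution of $q_{\mathbf a}^{-2}$ under the weights $2^{-|\mathbf a|}$. That distribution is essentially a push-forward of $\d ?$, since $q_{\mathbf a}^{-2}$ is comparable to the cylinder length, so the same H\"older bound applies.

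\textbf{Step 3 (optimization).} Balance the three error sources: the large-deviation tail, the linearization error $\approx |t|\,e^{-2\lambda n'}$, and the near-diagonal mass $\approx (|t|^{-\delta})^{\alpha}$. Then optimize in $\theta$ and $\delta$ to reach the exponent $1/35$.

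The main obstacle is Step 2, the non-concentration count for pairs. Obtaining an explicit exponent requires explicit constants in the large deviation bound and in the H\"older regularity of the denominator distribution, rather than the qualitative spectral-gap arguments. I would first try crude Chernoff bounds with the exact generating function $\sum_a 2^{-a}x^{a}$, and accept a non-optimal exponent.
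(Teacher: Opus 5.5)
There is a genuine gap, and it sits at the step that is supposed to produce decay. After Cauchy--Schwarz your terms are $\int e^{it(h_{\mathbf a}-h_{\mathbf b})}\d\mu$. After splitting $\mathbf a\mathbf c$ and linearizing, they become $\int e^{i\eta\phi(x)}\d\mu(x)$ with a smaller frequency $\eta$, but these are still integrals against the singular measure $\mu$. Van der Corput and integration by parts are Lebesgue-measure tools and give nothing here. For $\phi(x)=x$ the integral is literally $\m(i\eta)$, so the step is circular. The missing idea is a transfer from $\mu$ to Lebesgue measure. In the paper this is Lemma~\ref{transfer}. For $f=\sum_{\W_Q}w_{\mathbf a}e^{ith_{\mathbf a}}$ one has $|\m(it)|\le\int|f|\d\mu$, $\norm{f}_\infty\le1$ and $\norm{f'}_\infty\le tQ^{-2}$. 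Averaging over intervals of length $\ell$ and using $\mu(J)\ll|J|^{2/3}$ then gives $\int|f|\d\mu\ll E^{3/7}L^{1/7}$, with $E=\int_0^1|f|^2\d x$. Only after this is van der Corput used, on Lebesgue integrals of $e^{it(h_{\mathbf a}-h_{\mathbf b})}$ (Lemma~\ref{osc}).

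The pair-counting step also fails as stated. Since $h_{\mathbf a}'(y)=\pm(q_k+q_{k-1}y)^{-2}$ and $q_{k-1}/q_k$ is not small, replacing $h_{\mathbf a}'$ by $q_{\mathbf a}^{-2}$ is accurate only up to a factor in $[1/4,1]$. That error is as large as the main term, so the phase cannot be reduced to $t(q_{\mathbf a}^{-2}-q_{\mathbf b}^{-2})\phi$; branches with equal $q_k$ but different $q_{k-1}$ really are different. Your non-concentration claim is false. The law of $q_{\mathbf a}^{-2}$ under the weights $2^{-|\mathbf a|}$ is not essentially a push-forward of $\mu$ and does not inherit its H\"older exponent. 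It is a discrete law on the points $1/q^2$, whose atoms collect many words (a word and its reversal have the same continuant and the same weight). After your restriction to $q_{\mathbf a}^2\approx e^{\lambda n}$ it puts almost all its mass on an interval of length roughly $e^{-\lambda n}$, which already violates any bound $\nu(J)\ll|J|^{\alpha}$. The comparison $q_{\mathbf a}^{-2}\asymp|I_{\mathbf a}|$ transfers nothing.

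What works in the paper is that $\mathbf a\mapsto(q_{\mathbf a},r_{\mathbf a})$ is injective onto primitive lattice points. Each point carries weight at most $\max w_{\mathbf a}\ll Q^{-4/3}$, which gives $\sum_{\mathbf b\ne\mathbf a}w_{\mathbf b}D(\mathbf a,\mathbf b)^{-1/2}\ll Q^{-1/3}$. Combined with the overshoot bound of Lemma~\ref{moment}, this yields $E\ll Q^{-4/3}+t^{-1/2}Q^{7/6}$. There, the even-index stopping time $\W_Q$ replaces your fixed-generation large-deviation selection. The exponent then comes from $Q=t^{1/5}$, $L=Q^3$ and $Q^{-4/7}Q^{3/7}=Q^{-1/7}=t^{-1/35}$. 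Your Step~3 asserts $1/35$ without any computation, so even granting Steps 1--2 it would not establish the stated exponent.
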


Write \(\mu=\d ?\). If \(A_1,A_2,\ldots\) are independent random variables with \(\Pp(A_j=a)=2^{-a}\), then the random continued fraction $[0,A_1,A_2,\ldots]$ has distribution function \(?(x)\). Since \(?(x)\) is continuous, the associated measure has no atoms. 
Now, with
\[
h_a(x)=\frac1{a+x},\quad {\mathbf a}=(a_1,\ldots a_k),\quad h_{\mathbf a}=h_{a_1}\circ\cdots\circ h_{a_k},
\quad I_{\mathbf a}=h_{\mathbf a}([0,1]),\quad
w_{\mathbf a}=2^{-(a_1+\cdots+a_k)},
\]
we have, for every bounded Borel function $g$,
\begin{equation}\label{cylinder}
	\int_{I_{\mathbf a}}g\d\mu
	=w_{\mathbf a}\int_0^1g(h_{\mathbf a}(x))\d\mu(x).
\end{equation}
Endpoints have zero mass throughout. These are standard facts about $?(x)$. We use the classical sharp H\"older estimate for $?(x)$ determined by Salem~\cite{Salem}: for every interval $J\subseteq[0,1]$,
\begin{equation}\label{holder}
	\mu(J)\ll |J|^\alpha\le |J|^{2/3},\quad
	\alpha=\frac{\log2}{2\log \varphi}=0.7202_{+}, \text{where }\varphi=\frac{1+\sqrt5}{2}.
\end{equation}

\section{Stopping at a denominator scale}

For convergents $p_j/q_j=[0,a_1,\ldots,a_j]$, use
$p_{-1}=1,p_0=0,q_{-1}=0,q_0=1$ and
$q_j=a_jq_{j-1}+q_{j-2}$, with the same recurrence for $p_j$.
Here are the standard branch formulas \cite{Kh}:
\begin{equation}\label{branch}
	h_{\mathbf a}(x)=\frac{p_k+p_{k-1}x}{q_k+q_{k-1}x},\qquad
	h_{\mathbf a}'(x)=\frac{(-1)^k}{(q_k+q_{k-1}x)^2},\qquad
	|I_{\mathbf a}|=\frac1{q_k(q_k+q_{k-1})}.
\end{equation}

The first major simplification relative to Jordan-Sahlsten is the use of a denominator stopping time, while in \cite{JS} authors use large deviations to select regular intervals at a fixed generation and refined continuant estimates.\\

So, fix $Q\ge2$. Stop each digit sequence at the first even index $k$ with
$q_k\ge Q$, and denote the family of stopping words $\mathbf{a}$ by $\W_Q$.
These intervals partition the irrationals; even length makes every branch
increasing. For $\mathbf a\in\W_Q$, write
$q_{\mathbf a}=q_k$ and $r_{\mathbf a}=q_{k-1}$. Then
\begin{equation}\label{weights}
	\sum_{\W_Q}w_{\mathbf a}=1,\qquad
	q_{\mathbf a}\ge Q,\qquad
	0<r_{\mathbf a}<q_{\mathbf a},\qquad
	\max_{\W_Q}w_{\mathbf a}\ll Q^{-4/3},
\end{equation}
the last estimate following from \eqref{holder} and \eqref{branch}.
The following result controls the overshoot of the denominators $q_{\mathbf a}$ beyond $Q$.
\begin{lem}\label{moment}
	Uniformly for $Q\ge2$,
	\[
		\sum_{\mathbf a\in\W_Q}w_{\mathbf a}q_{\mathbf a}^{3/2}\ll Q^{3/2}.
	\]
\end{lem}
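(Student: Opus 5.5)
The plan is to condition on the last unstopped even prefix and control the last two digits by their geometric tails. Every $\mathbf a\in\W_Q$ of length $k$ (even) can be written as $\mathbf a=(\mathbf b,a_{k-1},a_k)$. Here $\mathbf b$ is a word of even length $k-2$ that was \emph{not} stopped, so $m:=q_{k-2}<Q$ (with $\mathbf b$ empty and $m=1$ when $k=2$). Two facts about the recurrence drive the argument. First,
\[
q_k=a_kq_{k-1}+q_{k-2}\le (a_k+1)q_{k-1}\le (a_k+1)(a_{k-1}+1)\,m .
\]
Second, $q_k\ge q_{k-1}+q_{k-2}\ge 2q_{k-2}$, so along any digit sequence the even-index denominators at least double.

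Since $w_{\mathbf a}=w_{\mathbf b}2^{-a_{k-1}-a_k}$, drop the stopping constraint on the last two digits except for the requirement $q_k\ge Q$. Writing $P=(A+1)(B+1)$ with $A,B$ independent and $\Pp(A=a)=2^{-a}$, this gives
\[
\sum_{\W_Q}w_{\mathbf a}q_{\mathbf a}^{3/2}\le \sum_{\mathbf b}w_{\mathbf b}\,m_{\mathbf b}^{3/2}\,\E\bigl[P^{3/2}\ind_{P\ge Q/m_{\mathbf b}}\bigr].
\]
The sum runs over unstopped even words $\mathbf b$ with $m_{\mathbf b}=q(\mathbf b)<Q$. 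Put $f(x)=x^{-3/2}\E[P^{3/2}\ind_{P\ge x}]$, so that each term equals $w_{\mathbf b}Q^{3/2}f(Q/m_{\mathbf b})$. A direct computation shows that $f$ decays super-polynomially. Indeed, $P\ge x$ forces $A+B\ge 2\sqrt x-2$. For $n=A+B$ we have $\Pp(A+B=n)=(n-1)2^{-n}$ and $P\le (n/2+1)^2$. Hence $f(x)\ll x^{-3/2}\sum_{n\ge 2\sqrt x-2}n^4 2^{-n}\ll 2^{-\sqrt x}$ up to polynomial factors, and in any case $f(x)\ll 1$ for $x\ge1$.

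Now split the words $\mathbf b$ dyadically according to $m_{\mathbf b}\in[Q2^{-j-1},Q2^{-j})$, $j\ge0$. By the doubling of even-index denominators, each infinite digit sequence has at most one (or boundedly many) prefixes of even length whose $q$ lies in a given dyadic block. Since the prefix cylinders of the same length are disjoint and the measure is a probability, distinct lengths add up boundedly many times. Therefore $\sum_{\mathbf b:\,m_{\mathbf b}\in\text{block }j}w_{\mathbf b}\le 2$. Consequently
\[
\sum_{\W_Q}w_{\mathbf a}q_{\mathbf a}^{3/2}\le 2Q^{3/2}\sum_{j\ge0}\sup_{x\ge 2^j}f(x)\ll Q^{3/2},
\]
because $\sup_{x\ge2^j}f(x)\ll 2^{-c2^{j/2}}$ is summable in $j$. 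All constants are independent of $Q$.

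The main obstacle is that naively summing over all unstopped prefixes costs a factor equal to the expected number of steps, about $\log Q$. The step that must be justified carefully is therefore the dyadic packing bound "$\le 2$ per block" via $q_k\ge 2q_{k-2}$. Only with it does the rapid tail decay of $f$ absorb the sum over scales.
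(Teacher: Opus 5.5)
Your proof is correct, but it is organized differently from the paper's and does more work than needed.

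The paper starts from the same inequality, written probabilistically as $q_{2\tau}^{3/2}\le Z_\tau^{3/2}q_{2\tau-2}^{3/2}$. It then deliberately discards the crossing constraint by replacing $\ind_{\{\tau=j\}}$ with $\ind_{\{\tau\ge j\}}$. Because $Z_j$ is independent of $\{\tau\ge j\}$ and of $q_{2j-2}$, this factors out the full moment $C_*=\E Z_j^{3/2}$ and leaves $C_*\E\sum_{j\le\tau}q_{2j-2}^{3/2}$. That sum is bounded pathwise by $Q^{3/2}/(1-2^{-3/2})$, since the pre-stopping even denominators lie below $Q$ and at least double at each step.

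You instead keep the constraint $P\ge Q/m_{\mathbf b}$, which amounts to a last-exit decomposition over the parent word. Your dyadic packing bound (at most one even prefix per block) is the counting form of the paper's pathwise geometric series.

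The super-polynomial decay of $f$ plays no essential role. The $\log Q$ ``obstacle'' you describe appears only if you use the crude bound $f\ll 1$. Trivially $f(x)\le x^{-3/2}\E P^{3/2}=C_*x^{-3/2}$, so block $j$ contributes at most $2C_*Q^{3/2}2^{-3j/2}$. The geometric decay in $j$ comes from the weight $m_{\mathbf b}^{3/2}<Q^{3/2}2^{-3j/2}$ itself, exactly as in the paper, not from the tail of $P$.

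With that observation your argument reduces to essentially the paper's three-line proof, and the tail analysis of $A+B$ can be dropped. The paper's choice to overcount with $\ind_{\{\tau\ge j\}}$ is precisely what lets it avoid any tail estimate.
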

\begin{proof}
	For a random digit sequence put $\tau=\min\{j\ge1:q_{2j}\ge Q\}$ and
	$Z_j=(a_{2j-1}+1)(a_{2j}+1)$. The recurrences for convergents give
	\[
	2q_{2j-2}\le q_{2j}\le Z_jq_{2j-2}.
	\]
	Thus $\tau\le\lceil\log_2Q\rceil$. The event $\{\tau\ge j\}$
	and $q_{2j-2}$ depend only on the preceding digits, hence are independent
	of $Z_j$. Writing
	$C_*=(\sum_{a\ge1}2^{-a}(a+1)^{3/2})^2<\infty$, we obtain
	\[
	\E q_{2\tau}^{3/2}
	\le\sum_{j\ge1}\E\bigl[\ind_{\{\tau\ge j\}}Z_j^{3/2}q_{2j-2}^{3/2}\bigr]
	=C_*\E\sum_{j=1}^{\tau}q_{2j-2}^{3/2}
	\le\frac{C_*Q^{3/2}}{1-2^{-3/2}}.
	\]
	The last inequality holds on each digit sequence: the denominators before
	stopping are below $Q$ and increase by at least a factor $2$ at every even
	step. The expectation on the left is exactly the sum in the lemma.
\end{proof}

The pairs $(q_{\mathbf a},r_{\mathbf a})$ are primitive and distinct. Indeed, even length gives $q_{\mathbf a}p_{k-1}-r_{\mathbf a}p_k=1$.
It determines $p_k$ uniquely modulo $q_{\mathbf a}$, and $0<p_k<q_{\mathbf a}$;
then $p_{k-1}$ is determined as well. Equal pairs would therefore give equal
branches, contrary to disjointness of stopping-interval interiors.

\section{Two elementary analytic estimates}

For distinct stopping words put
$D(\mathbf a,\mathbf b)=\max\{|q_{\mathbf a}-q_{\mathbf b}|,
|r_{\mathbf a}-r_{\mathbf b}|\}\ge1$.

\begin{lem}\label{osc}
	For $t>0$ and $M=\max(q_{\mathbf a},q_{\mathbf b})$,
	\[
		\left|\int_0^1e^{it(h_{\mathbf a}(x)-h_{\mathbf b}(x))}\d x\right|
		\ll\left(\frac{M^3}{tD(\mathbf a,\mathbf b)}\right)^{1/2}.
	\]
\end{lem}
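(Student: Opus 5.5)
We intend to use the first-derivative test (integration by parts), after removing a small exceptional set on which the phase is nearly stationary. Put $\phi=h_{\mathbf a}-h_{\mathbf b}$ and $L_{\mathbf a}(x)=q_{\mathbf a}+r_{\mathbf a}x$, and similarly $L_{\mathbf b}$. Stopping words have even length, so \eqref{branch} gives $h_{\mathbf a}'=L_{\mathbf a}^{-2}$. Hence
\[
\phi'(x)=\frac{L_{\mathbf b}(x)^2-L_{\mathbf a}(x)^2}{L_{\mathbf a}(x)^2L_{\mathbf b}(x)^2}
=\frac{\ell(x)\,\bigl(L_{\mathbf a}(x)+L_{\mathbf b}(x)\bigr)}{L_{\mathbf a}(x)^2L_{\mathbf b}(x)^2},
\qquad \ell(x)=(q_{\mathbf b}-q_{\mathbf a})+(r_{\mathbf b}-r_{\mathbf a})x .
\]
By \eqref{weights} we have $0<r<q\le M$ for both words. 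So on $[0,1]$ we get $L_{\mathbf a}+L_{\mathbf b}\ge M$ and $L_{\mathbf a}L_{\mathbf b}\le 4M^2$. This yields the lower bound $|\phi'(x)|\ge |\ell(x)|/(16M^3)$.

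Next we control the set where $\ell$ is small. Write $\ell(x)=c_0+c_1x$, so that $\max(|c_0|,|c_1|)=D:=D(\mathbf a,\mathbf b)\ge1$. Fix a parameter $0<\delta<D/2$.
\begin{itemize}
\item If $|c_1|<D/2$, then $|c_0|=D$ and $|\ell|\ge D/2>\delta$ on all of $[0,1]$.
\item If $|c_1|\ge D/2$, then $\{x\in[0,1]:|\ell(x)|<\delta\}$ is an interval of length at most $4\delta/D$.
\end{itemize}
In either case the exceptional set $E_\delta$ contributes at most $4\delta/D$ to the integral, by the trivial bound. Its complement in $[0,1]$ consists of at most two intervals, on each of which $\ell$, and hence $\phi'$, has constant sign and $|\phi'|\ge\delta/(16M^3)$.

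On each such interval $J$ we integrate by parts:
\[
\int_J e^{it\phi}=\Bigl[\frac{e^{it\phi}}{it\phi'}\Bigr]_{\partial J}-\int_J e^{it\phi}\,\d\Bigl(\frac1{it\phi'}\Bigr).
\]
The modulus of this is at most $t^{-1}\bigl(2\sup_J|1/\phi'|+\mathrm{Var}_J(1/\phi')\bigr)$.

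The one point needing care is the variation term, because $\phi'$ need not be monotone. The remedy is that $1/\phi'=L_{\mathbf a}^2L_{\mathbf b}^2/\bigl(\ell\,(L_{\mathbf a}+L_{\mathbf b})\bigr)$ is a rational function whose numerator and denominator have bounded degree. Its derivative therefore has $O(1)$ zeros, so $J$ splits into $O(1)$ monotone pieces. Consequently $\mathrm{Var}_J(1/\phi')\ll\sup_J|1/\phi'|\ll M^3/\delta$, and each interval contributes $\ll M^3/(t\delta)$.

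Adding the two contributions gives
\[
\left|\int_0^1 e^{it\phi}\right|\ll \frac{\delta}{D}+\frac{M^3}{t\delta}.
\]
We then choose $\delta=(M^3D/t)^{1/2}$, which balances the two terms at $(M^3/(tD))^{1/2}$. This choice requires $\delta<D/2$. If instead $\delta\ge D/2$, then $(M^3/(tD))^{1/2}=\delta/D\ge1/2$, and the claimed bound follows from the trivial bound $1$. We expect the main obstacle to be the bounded-variation step, which is settled by the rational-function argument above. The rest is bookkeeping.
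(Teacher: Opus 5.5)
Your proof is correct and follows the paper's argument: the same factorization of $\phi'$ and lower bound, the same dichotomy on the slope of $\ell$ bounding the near-stationary set by $4\delta/D$ (the paper's $4\varepsilon$, with $\delta=\varepsilon D$), and the same balancing choice of $\delta$. The only difference is minor. You prove the first-derivative bound by integrating by parts and control $\mathrm{Var}_J(1/\phi')$ by a degree count on a rational function. The paper instead cites van der Corput's lemma, after showing that $\psi''$ has at most one zero because $U/V$ is a strictly monotone M\"obius map for distinct primitive pairs.
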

\begin{proof}
Set $U=q_{\mathbf a}+r_{\mathbf a}x$, $V=q_{\mathbf b}+r_{\mathbf b}x$,
$\psi=h_{\mathbf a}-h_{\mathbf b}$, and $D=D(\mathbf a,\mathbf b)$. Then
\[
\psi'=(V-U)\frac{U+V}{U^2V^2},\qquad
|\psi'|\ge\frac{|V-U|}{4M^3}.
\]
Moreover, $\psi'$ is monotone on at most two intervals: $\psi''=0$ is
equivalent to $U/V=(r_{\mathbf a}/r_{\mathbf b})^{1/3}$, and $U/V$ is
strictly monotone for distinct primitive pairs.

For $0<\varepsilon\le1/2$, the interval
$\{|V-U|<\varepsilon D\}$ has length at most $4\varepsilon$.
Indeed, either the slope of $V-U$ has magnitude at least $D/2$, or its
constant term has magnitude $D$ and this interval is empty.
On the complement, $|\psi'|\ge\varepsilon D/(4M^3)$.
The first-derivative van der Corput estimate
\cite[Theorem~14.2]{Mattila}, applied on the bounded number of monotonicity
intervals, therefore gives
\[
\left|\int_0^1e^{it\psi(x)}\d x\right|
\ll\varepsilon+\frac{M^3}{t\varepsilon D}.
\]
Choose $\varepsilon=(M^3/(tD))^{1/2}$ when this is at most $1/2$;
otherwise use the trivial bound $1$.
\end{proof}

\begin{lem}\label{transfer}
	If $g\in C^1([0,1])$, $\norm{g}_\infty\le1$, $\norm{g'}_\infty\le L$
	with $L\ge1$, and $E_g=\int_0^1|g|^2\d x$, then
	\[
		\int_0^1|g|\d\mu\ll E_g^{3/7}L^{1/7}.
	\]
\end{lem}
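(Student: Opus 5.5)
The plan is to discretize $[0,1]$ at a scale $\delta$, compare $|g|$ with its local Lebesgue averages using the Lipschitz bound, and then pay for the passage from $\mu$ to Lebesgue measure with the H\"older estimate \eqref{holder}, applied through Cauchy--Schwarz rather than interval by interval. Choosing $\delta$ to balance the two error terms should give exactly the exponents $3/7$ and $1/7$.

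Fix $N\in\mathbb N$, put $\delta=1/N$, and partition $[0,1]$ into the intervals $J_1,\ldots,J_N$ of length $\delta$. Let $a_j=\delta^{-1}\int_{J_j}|g|\d x$ be the Lebesgue average of $|g|$ on $J_j$. Since $|g|$ is $L$-Lipschitz, we have $|g(x)|\le a_j+L\delta$ for $x\in J_j$. Since endpoints carry no mass and $\mu$ is a probability measure, this gives
\[
\int_0^1|g|\d\mu\le\sum_{j}\mu(J_j)a_j+L\delta .
\]

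The key step is to handle the sum by Cauchy--Schwarz:
\[
\sum_j\mu(J_j)a_j\le\Bigl(\sum_j\mu(J_j)^2\Bigr)^{1/2}\Bigl(\sum_j a_j^2\Bigr)^{1/2}.
\]
For the first factor, \eqref{holder} gives $\mu(J_j)\ll\delta^{2/3}$, so $\sum_j\mu(J_j)^2\le\max_j\mu(J_j)\sum_j\mu(J_j)\ll\delta^{2/3}$. For the second factor, Jensen gives $a_j^2\le\delta^{-1}\int_{J_j}|g|^2\d x$, hence $\sum_j a_j^2\le\delta^{-1}E_g$. Together these yield
\[
\int_0^1|g|\d\mu\ll\delta^{1/3}\delta^{-1/2}E_g^{1/2}+L\delta=\delta^{-1/6}E_g^{1/2}+L\delta .
\]
This step is the one that matters. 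The naive bound $\mu(J_j)a_j\le\delta^{2/3}a_j$, summed directly, only gives $\delta^{-1/3}E_g^{1/2}$, which leads to the weaker exponents $3/8,1/4$. Bounding $\sum_j\mu(J_j)^2$ by a maximum times a total mass, which is exactly what Cauchy--Schwarz permits, is what produces the stated exponents.

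It remains to optimize. Balancing $\delta^{7/6}=E_g^{1/2}/L$ gives $\delta_0=E_g^{3/7}L^{-6/7}$, and then both terms are of size $E_g^{3/7}L^{1/7}$. Since $\norm{g}_\infty\le1$ and $L\ge1$, we have $E_g\le1$ and so $\delta_0\le1$. If $E_g=0$ then $g\equiv0$ and there is nothing to prove. Otherwise take $N=\lceil1/\delta_0\rceil$. Then $\delta_0/2\le\delta\le\delta_0$, which changes both terms by at most a constant factor, and the lemma follows.
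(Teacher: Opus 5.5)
Your proof is correct and is essentially the paper's argument. Both discretize at a scale $\ell$, compare $|g|$ with its local averages via the Lipschitz bound, and apply Cauchy--Schwarz together with $\mu(J)\ll|J|^{2/3}$ to reach $E_g^{1/2}\ell^{-1/6}+L\ell$, then balance. The only difference is cosmetic: the paper weights Cauchy--Schwarz by $\mu(J)$, whereas your unweighted version with $\sum_J\mu(J)^2\le\max_J\mu(J)$ yields the identical bound.
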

\begin{proof}
Partition $[0,1]$ into equal intervals $J$ of length $\ell$.
Averaging $|g(x)|\le |g(y)|+L\ell$ over $y\in J$, then applying
Cauchy--Schwarz first on $J$ and then to the sum weighted by $\mu(J)$, gives
\[
\int|g|\d\mu
\le\sum_J\mu(J)\left(\ell^{-1}\int_J|g|^2\d x\right)^{1/2}+L\ell
\ll E_g^{1/2}\ell^{-1/6}+L\ell,
\]
by \eqref{holder}. For $E_g>0$, take
$\ell\asymp(E_g/L^2)^{3/7}\le1$. If $E_g=0$, then $g=0$.
\end{proof}

\section{Proof of the theorem}

For $t>0$, average over all stopping words:
\[
f(x)=\sum_{\mathbf a\in\W_Q}w_{\mathbf a}e^{it h_{\mathbf a}(x)}.
\]
The series and its derivative converge uniformly by \eqref{weights}
and \eqref{branch}. Equation~\eqref{cylinder} therefore gives
\begin{equation}\label{reduction}
	|\m(it)|\le\int_0^1|f|\d\mu,\qquad
	\norm{f}_\infty\le1,\qquad \norm{f'}_\infty\le tQ^{-2}.
\end{equation}
For a fixed denominator pair, at most $8j$ integer pairs lie at
maximum-coordinate distance $j$. Splitting at $j=Q^{2/3}$ and using
\eqref{weights}, we obtain, uniformly in $\mathbf a$,
\begin{equation}\label{shell}
\sum_{\mathbf b\ne\mathbf a}w_{\mathbf b}D(\mathbf a,\mathbf b)^{-1/2}
\ll Q^{-4/3}\sum_{1\le j\le Q^{2/3}}j^{1/2}+Q^{-1/3}
\ll Q^{-1/3}.
\end{equation}
Expand $|f|^2$; the double series may be integrated termwise since
$\sum_{\mathbf a,\mathbf b}w_{\mathbf a}w_{\mathbf b}=1$.
The diagonal is at most $\max w_{\mathbf a}\ll Q^{-4/3}$.
For the off-diagonal use Lemma~\ref{osc},
$M^{3/2}\le q_{\mathbf a}^{3/2}+q_{\mathbf b}^{3/2}$, symmetry,
\eqref{shell}, and Lemma~\ref{moment}. It follows that
\[
\begin{aligned}
	E:=\int_0^1|f|^2\d x
	&\ll Q^{-4/3}
	+t^{-1/2}\sum_{\mathbf a}w_{\mathbf a}q_{\mathbf a}^{3/2}
	\sum_{\mathbf b\ne\mathbf a}
	w_{\mathbf b}D(\mathbf a,\mathbf b)^{-1/2}\\
	&\ll Q^{-4/3}+t^{-1/2}Q^{7/6}.
\end{aligned}
\]
For $t\ge32$, choose $Q=t^{1/5}$. Then
$E\ll Q^{-4/3}$ and $L:=tQ^{-2}=Q^3$.
Lemma~\ref{transfer} and \eqref{reduction} give
\[
|\m(it)|\ll E^{3/7}L^{1/7}
\ll Q^{-4/7}Q^{3/7}=Q^{-1/7}=t^{-1/35}.
\]
Conjugation handles negative $t$, and $|\m(it)|\le1$ handles the remaining
bounded range. This proves Theorem~\ref{main}.\hfill$\square$\\

Jordan--Sahlsten also organize branch pairs by continuant differences
and reduce integration against the Gibbs measure to Lebesgue $L^2$
estimates. Here the denominator stopping rule and exact Bernoulli
weights permit elementary lattice counting, while the global
H\"older bound gives the direct averaging estimate of
Lemma~\ref{transfer}.\\

The choice $2/3$ in \eqref{holder} keeps the proof simple.
Retaining $\alpha$ and choosing $Q=t^{1/[3(1+\alpha)]}$ gives 
\[
\eta=\frac{3\alpha^2-1}{3(1+\alpha)(3-\alpha)}
=0.0472673444\ldots .
\]
 Using more complicated technique, we could push thist a bit further. However, it seems to be extremely difficult, in the framework of the current approach, to get above $0.1$. Thus, as for the lower bound, we aimed for simplicity and not the constant. Yet, slightly improving the known upper bound, we can pose a cautious conjecture about the actual decay speed.\\

% The following final section is independent of the preceding pointwise-decay proof.

\section{Correlation dimension}

As one of the methods to attack Salem's problem, the following quadratic sums were introduced in \cite{AlkFS}:
\[
\Sigma_N=\sum_{j=0}^{N-1}\Bigg{(}?\!\left(\frac{j+1}{N}\right)-?\!\left(\frac jN\right)\Bigg{)}^2.
\]
The standard comparisons with correlation integrals and Fourier mean
squares are
\begin{equation}\label{correlationcomparison}
	\Sigma_N\asymp(\mu\otimes\mu)\{|x-y|\le N^{-1}\}
	\asymp\frac1N\int_{-N}^{N}|\m(it)|^2\d t;
\end{equation}
see \cite{GH}. For the second comparison one may use the positive
triangular Fourier cutoff; the first comparison also gives
$C(Ar)\ll A\,C(r)$ for $C(r)=(\mu\otimes\mu)\{|x-y|\le r\}$, and a
dyadic decomposition yields the reverse estimate. We therefore need only
determine $\Sigma_N$ from the Minkowski interval weights.

\begin{prop}
	The correlation dimension
	$D_2=-\lim_{N\to\infty}\log\Sigma_N/\log N$ exists and is the unique zero
	in $(0,1)$ of $P(s)=\log r(\mathcal K_s)$, where $r$ is the spectral
	radius on $C([0,1])$ and, for $s\ge0$,
	\begin{equation}\label{pressure}
		(\mathcal K_sg)(x)=\sum_{a\ge1}4^{-a}(a+x)^{2s}
		g\left(\frac{1}{a+x}\right).
	\end{equation}
	Moreover, $\Sigma_N\asymp N^{-D_2}$ and
	$\int_{-T}^{T}|\m(it)|^2\d t\asymp T^{1-D_2}$ for $T\ge1$.
\end{prop}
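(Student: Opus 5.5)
By \eqref{correlationcomparison} it suffices to show $\Sigma_N\asymp N^{-D_2}$, where $D_2$ is the zero of $P$. The last assertion then follows from the second comparison with $T=N$, extended from integers to all real $T\ge1$ by monotonicity. The plan is to replace the uniform grid of mesh $N^{-1}$ by the Minkowski cylinders $I_{\mathbf a}$ of length about $N^{-1}$, and to evaluate the resulting sums of squared weights with the transfer operator $\mathcal K_s$.

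\emph{Step 1: reduction to squared cylinder weights.} Use a stopping family $\W'_N$ analogous to $\W_Q$: stop at the first $k$ with $|I_{\mathbf a}|\le N^{-1}$. The correlation integral $C(N^{-1})=(\mu\otimes\mu)\{|x-y|\le N^{-1}\}$ is comparable to $S_N:=\sum_{\W'_N}w_{\mathbf a}^2$, and we prove both directions.

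For the lower bound, note that $\mathbf a\in\W'_N$ has $|I_{\mathbf a}|\le N^{-1}$. Hence $I_{\mathbf a}\times I_{\mathbf a}\subset\{|x-y|\le N^{-1}\}$, and these squares are disjoint up to null sets. This gives $C(N^{-1})\ge S_N$.

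For the upper bound, parents of stopping words have length greater than $N^{-1}$. A point $x\in I_{\mathbf a}$ and a point $y$ with $|x-y|\le N^{-1}$ lie in a bounded number of neighbouring cylinders of comparable size, after possibly passing to parents. Quasi-Bernoulli comparability of neighbouring weights handles this. The cylinder $[0,a_1,\dots]$ next to $[0,a_1+1,\dots]$ has weight ratio $2$, and the overshoot is handled with the factor $(a+1)^{2}$ losses, which are summable against $4^{-a}$.

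I expect this step to be the main obstacle. Neighbouring cylinders with large digits can be very short and have tiny weight, while adjacent cylinders at different depths can have quite different lengths. A clean way to organise it is to prove $C(r)\asymp\sum_{\mathbf a}w_{\mathbf a}\mu(B(I_{\mathbf a},r))$ and then bound $\mu$ of the enlarged interval by $w_{\mathbf a}$ times a constant. For the latter I would use \eqref{cylinder} together with bounded distortion of $h_{\mathbf a}$ from \eqref{branch}, namely $|h_{\mathbf a}'|\asymp q_k^{-2}$ uniformly on $[0,1]$.

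\emph{Step 2: pressure and asymptotics.} Since $|I_{\mathbf a}|\asymp q_k^{-2}\asymp |h'_{\mathbf a}(x)|$ uniformly in $x$, we have
\[
\sum_{|\mathbf a|=k}w_{\mathbf a}^2|I_{\mathbf a}|^{-s}\asymp (\mathcal K_s^k1)(0)\asymp e^{kP(s)}.
\]
The first relation holds because $w_{\mathbf a}^2=\prod 4^{-a_j}$ and $(a+x)^{2s}$ multiplies up the chain rule to $|h_{\mathbf a}'|^{-s}$. The second holds because $\mathcal K_s$ is a positive operator, quasi-compact on Lipschitz functions for $s<\infty$ thanks to the $4^{-a}$ decay, so Ruelle--Perron--Frobenius gives a positive eigenfunction.

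Next we establish the shape of $P$. $P$ is convex, continuous, and strictly increasing, since $(a+x)^{2}\ge1$ with strict growth from $a\ge2$ terms. $P(0)=\log r(\mathcal K_0)<0$ because $\sum4^{-a}(\cdot)\le 1/3$. $P(1)>0$ because $\mathcal K_1 1\ge\sum 4^{-a}a^2=20/27$; to get this strictly above $1$, compare with the $\sum 2^{-a}$ normalization, giving $\sum_a 4^{-a}(a+x)^2\ge \sum4^{-a}(a^2+2ax)$, and check via iterates that the eigenvalue exceeds $1$. It is easier to note that $r(\mathcal K_1)\ge\inf_x \mathcal K_11(x)$ and evaluate $\sum 4^{-a}(a+x)^2$ at $x=0$, possibly using $\mathcal K_1^2$ if needed. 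Hence there is a unique zero $D_2\in(0,1)$.

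\emph{Step 3: stopping-time renewal.} With the $\mathcal K_{D_2}$ eigenfunction $h>0$ and the stopping family $\W'_N$, the martingale/stopping identity gives
\[
\sum_{\W'_N}w_{\mathbf a}^2|h_{\mathbf a}'|^{-D_2}h\circ h_{\mathbf a}=h.
\]
Since $|I_{\mathbf a}|\asymp N^{-1}$ up to the overshoot factor $a_k^{2}$, this yields $S_N\asymp N^{-D_2}$. The overshoot is controlled as in Lemma \ref{moment}, using the summable tail $\sum 4^{-a}a^{C}$ for the last digit. Combining this with Step 1 and \eqref{correlationcomparison} gives $\Sigma_N\asymp N^{-D_2}$, hence the existence of the limit.
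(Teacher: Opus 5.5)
Your lower bound $C(N^{-1})\ge S_N$ is fine. The upper bound $C(N^{-1})\ll S_N$ is the heart of the proposition, and the route you suggest for it fails, because the bound $\mu(B(I_{\mathbf a},r))\ll w_{\mathbf a}$ is false for stopping words.

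Take $r=N^{-1}$ and $\mathbf a=(N)$. Since $|I_{(N)}|=1/(N(N+1))\le N^{-1}$ and the parent is the empty word, $\mathbf a$ is a stopping word. But $B(I_{\mathbf a},N^{-1})\supseteq[0,2/N]\supseteq\bigcup_{a'\ge N/2}I_{(a')}$, which has mass at least $2^{-N/2}$, while $w_{\mathbf a}=2^{-N}$. The same happens at interior rationals. For example, the $N^{-1}$-neighbourhood of $I_{(2,N)}$ contains every $I_{(2,a')}$ with $a'\ge N/4$.

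In general, suppose a parent $\mathbf b$ first crosses the scale at digit $a_0$. Its stopped children $\mathbf b a$ with $a\gg a_0$ are much shorter than $N^{-1}$. A point $y$ within $N^{-1}$ of such a child ranges over unboundedly many stopped cylinders, so there is no bounded family of comparable neighbours. Passing to the parent costs the unbounded factor $w_{\mathbf b}/w_{\mathbf b a}=2^{a}$. Neither quasi-Bernoulli comparison of adjacent weights nor bounded distortion of $h_{\mathbf a}$ can repair this. The inequality $\sum_{\mathbf a}w_{\mathbf a}\mu(B(I_{\mathbf a},N^{-1}))\ll\sum_{\mathbf a}w_{\mathbf a}^2$ can only hold on average. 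Proving it requires quantitative control of the squared weight carried by stopped intervals much shorter than $N^{-1}$, and your plan does not provide that.

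The paper supplies exactly this control. It stops at $q_{\mathbf a}\ge Q=\sqrt N$, so every stopped interval has length at most $N^{-1}$, and those meeting a grid cell $J$ have total length at most $3/N$. Apply Cauchy--Schwarz to $\mu(J)=\sum_I\mu(I\cap J)$ with weights $|I|$, then use $\sum_J\mu(I\cap J)^2\le\mu(I)^2$. This gives $\Sigma_N\le\frac3N\sum_I w_I^2/|I|\le\frac6N\sum_{\mathcal V_Q}w_{\mathbf a}^2q_{\mathbf a}^2$. Short intervals are thus penalised by the factor $1/|I|$ rather than compared with their neighbours. The upper bound reduces to the second stopped moment $\sum_{\mathcal V_Q}w_{\mathbf a}^2q_{\mathbf a}^2\ll Q^{2-2\delta}$. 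That moment comes from grouping stopped words under their parent $\mathbf b$ with first crossing digit $a_0$, and using $q_{\mathbf b(a_0+k)}<(k+2)Q$ together with $w_{\mathbf b(a_0+k)}^2=4^{-k}w_{\mathbf b a_0}^2$.

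This grouping is also what your Step~3 is missing. Measuring the overshoot by the last digit $a_k$ and invoking $\sum 4^{-a}a^C<\infty$ does not by itself close the argument. Every stopped child of a parent with large $a_0$ has $a_k\ge a_0$, so the loss per parent is $(a_0+1)^{2\delta}$, which is unbounded. The overshoot must be measured by $k=a_k-a_0$, whose tail $4^{-k}(k+2)^{2\delta}$ is summable uniformly in the parent.

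Finally, in your check of $P(1)>0$, evaluating $\mathcal K_1 1$ at $x=0$ gives only $20/27<1$. You need the second iterate: $(\mathcal K_1^2 1)(0)=\sum_{a,b}4^{-a-b}(ab+1)^2=769/729>1$.
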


\begin{proof}
	\emph{Pressure.}
	For arbitrary words let
	$Z_n(s)=\sum_{|\mathbf a|=n}w_{\mathbf a}^2q_{\mathbf a}^{2s}$, with
	$Z_0=1$. The continuant bounds
	$q_{\mathbf a}q_{\mathbf b}\le q_{\mathbf a\mathbf b}
	\le2q_{\mathbf a}q_{\mathbf b}$ give
	\[
	Z_nZ_m\le Z_{n+m}\le4^sZ_nZ_m,
	\qquad
	Z_n\le\mathcal K_s^n1\le4^sZ_n.
	\]
	Gelfand's spectral-radius formula therefore gives
	\begin{equation}\label{partitionpressure}
		4^{-s}e^{nP(s)}\le Z_n(s)\le e^{nP(s)}.
	\end{equation}
	The bound $q_{\mathbf a}\le\prod_j(a_j+1)$ makes $P$ finite, and
	\eqref{partitionpressure} shows that
	$P(s)=\lim_{n\to\infty}n^{-1}\log Z_n(s)$; hence $P$ is convex.
	The Fibonacci bound gives
	$P(t)-P(s)\ge2(t-s)\log\varphi$ for $t>s$.
	Moreover, $P(0)=-\log3$, and the bounds with $n=1$ show that
	$P(s)\to P(0)$ as $s\downarrow0$. Finally,
	$Z_2(1)=\sum_{a,b\ge1}4^{-a-b}(ab+1)^2=769/729>1$, so $P(1)>0$.
	Thus $P$ has a unique zero $\delta\in(0,1)$.
	
	\emph{Stopping sums.}
	Stop at the first denominator at least $Q\ge2$, without a parity
	restriction:
	$\mathcal V_Q=\{\mathbf a:q_{\mathbf a^-}<Q\le q_{\mathbf a}\}$,
	where $\mathbf a^-$ deletes the last digit.
	Extend all stopping words to a common depth, which exists by the
	Fibonacci bound. The continuant bounds and \eqref{partitionpressure}
	for the tails give
	\begin{equation}\label{criticalsum}
		\sum_{\mathcal V_Q}w_{\mathbf a}^2q_{\mathbf a}^{2\delta}\asymp1.
	\end{equation}
	Group these words by their parent $\mathbf b$. If $a_0$ is its first
	crossing digit, then for $k\ge0$,
	\[
	Q\le q_{\mathbf b(a_0+k)}<(k+2)Q,\qquad
	w_{\mathbf b(a_0+k)}^2=4^{-k}w_{\mathbf b a_0}^2.
	\]
	Summing the geometric tails, first with the power $2\delta$ and then
	with power $2$, turns \eqref{criticalsum} into
	\begin{equation}\label{stoppedmoments}
		\sum_{\mathcal V_Q}w_{\mathbf a}^2\asymp Q^{-2\delta},
		\qquad
		\sum_{\mathcal V_Q}w_{\mathbf a}^2q_{\mathbf a}^2
		\ll Q^{2-2\delta}.
	\end{equation}
	
	\emph{Comparison with the grid.}
	Take $Q=\sqrt N$. Every stopped interval $I$ has length at most
	$N^{-1}$ and meets at most two grid cells. Conversely, the stopped
	intervals meeting a given cell $J$ have total length at most $3/N$.
	Hence
	\begin{equation}\label{gridcomparison}
		\frac12\sum_{\mathcal V_Q}w_{\mathbf a}^2
		\le\Sigma_N\le\frac3N\sum_{\mathcal V_Q}
		\frac{w_{\mathbf a}^2}{|I_{\mathbf a}|}
		\le\frac6N\sum_{\mathcal V_Q}w_{\mathbf a}^2q_{\mathbf a}^2.
	\end{equation}
	For the upper bound, apply Cauchy--Schwarz to
	$\mu(J)=\sum_I\mu(I\cap J)$ with weights $|I|$, then sum over $J$;
	use $\sum_J\mu(I\cap J)^2\le\mu(I)^2$.
	Equations \eqref{stoppedmoments}--\eqref{gridcomparison} give
	$\Sigma_N\asymp N^{-\delta}$, and \eqref{correlationcomparison}
	proves the remaining assertions.
\end{proof}

The Hausdorff dimension of the Minkowski measure was calculated to
$36$ decimal digits in \cite[Appendix~A.3]{AlkMC}:
\[
\dim_H\mu=
\frac{\log2}{2\int_0^1\log(1+x)\d\mu(x)}
=0.874716305108211142215152904219159757\ldots .
\]
Let
$\eta_*=\sup\{\eta\ge0:|\m(it)|=O(|t|^{-\eta})\}$ be the optimal
Fourier decay exponent. By the classical Fourier--energy identity and
the standard inequalities between Fourier, correlation and Hausdorff
dimensions; see, for example, \cite[Secs.~2.5, 3.5--3.6]{Mattila}
and \cite{GH},
\[
\min\{1/2,\eta_*\}\le\frac{D_2}{2}\le\frac{\dim_H\mu}{2}.
\]
Since $D_2<1$, this gives $\eta_*\le D_2/2$.

Numerical evaluation of \eqref{pressure} gives
$D_2/2=0.4223047_{+}$, compared
with $\dim_H\mu/2=0.4373582_{+}$. Thus the correlation dimension
gives a numerically sharper upper bound for the optimal Fourier decay
exponent. The mean-square law suggests the cautious conjecture
$\eta_*=D_2/2$.

\noindent Vilnius University, Department of Mathematics and Informatics, Institute of Computer Science, Naugarduko 24, LT-03225 Vilnius, Lithuania.
{\tt giedrius.alkauskas@mif.vu.lt}

\end{document}